\newtheorem{theorem}{Theorem}[section]
\newtheorem{corollary}[theorem]{Corollary}
\newtheorem{lemma}[theorem]{Lemma}
\newtheorem{proposition}[theorem]{Proposition}
\newtheorem{conjecture}[theorem]{Conjecture}
\theoremstyle{definition}
\newcommand{\Vol}{\operatorname{Vol}}
\newcommand{\dist}{\mathrm{dist}}
\def\Hy{\mathbf{H}^n}
\def\Sph{\mathbf{S}^n}
\def\cM{\mathcal{M}}
\def\cO{\mathcal{O}}
\newcommand\Pa{\mathrm P}
\def\cE{\mathcal{E}}
\newcommand{\ZZ}{\mathbb Z}
\newcommand{\QQ}{\mathbb Q}
\newcommand{\RR}{\mathbb R}
\newcommand{\Adels}{\mathbb A}
\newcommand{\disc}{\mathcal{D}}
\newcommand{\Vf}{V_{\mathrm{f}}}
\DeclareMathOperator{\Spin}{Spin}
\DeclareMathOperator{\SO}{SO}
\DeclareMathOperator{\PO}{PO}
\DeclareMathOperator{\PSL}{PSL}
\newcommand{\GG}{\mathbf G}
\newcommand\An{\mathrm{A}}
\newcommand\Bn{\mathrm{B}}
\newcommand\Dn{\mathrm{D}}
\title[Hyperbolic orbifolds of small volume]{Hyperbolic orbifolds of small volume}
\author[Mikhail Belolipetsky]
{Mikhail Belolipetsky \thanks{The author is supported by a CNPq research grant.}}
\begin{document}

\begin{abstract}
Volume is a natural measure of complexity of a Riemannian manifold. In this survey, we discuss the results and conjectures concerning $n$-dimensional hyperbolic manifolds and orbifolds of small volume.
\end{abstract}

\begin{classification}
Primary 22E40; Secondary 11E57, 20G30, 51M25.
\end{classification}

\begin{keywords}
Volume, Euler characteristic, hyperbolic manifold, hyperbolic orbifold, arithmetic group.
\end{keywords}

\maketitle

\section{Volume in hyperbolic geometry}\label{mbel:sec1}

A hyperbolic manifold is an $n$-dimensional manifold equipped with a complete Riemannian metric of constant sectional curvature $-1$. Any such manifold $\cM$ can be obtained as the quotient of the hyperbolic $n$-space $\Hy$ by a torsion-free discrete group $\Gamma$ of isometries of $\Hy$:
$$\cM = \Hy/\Gamma.$$
If we allow more generally the discrete group to have elements of finite order, then the resulting quotient space $\cO = \Hy/\Gamma$ is called a \emph{hyperbolic $n$-orbifold}.

We can descend the volume form from $\Hy$ to $\cO$ and integrate it over the quotient space. This defines the hyperbolic volume of $\cO$.
The generalization of the Gauss-Bonnet theorem says that in even dimensions the volume is proportional to the Euler characteristic. More precisely, we have for $n$ even:
\begin{equation}
 \Vol(\cM) = \frac{\Vol(\Sph)}{2} \cdot (-1)^{n/2} \chi(\cM),
\end{equation}
where $\Vol(\Sph)$ is the Euclidean volume of the $n$-dimensional unit sphere and $\chi(\cM)$ denotes the Euler characteristic. This formula generalizes to hyperbolic $n$-orbifolds with the orbifold Euler characteristic in place of $\chi$. Conceptually it says that the hyperbolic volume is a topological invariant and, like for the Euler characteristic, its value is a measure of complexity of the space. In odd dimensions the Euler characteristic vanishes but the volume is still a non-trivial topological invariant that measures the complexity of $\cM$. Indeed, the Mostow--Prasad rigidity theorem implies that every geometric invariant of a finite volume hyperbolic $n$-manifold (or orbifold) of dimension $n \ge 3$ is a topological invariant. One particular example of an application of the volume of the hyperbolic $3$-manifolds as a measure of complexity appears in knot theory --- see \cite{CDW99} and \cite{CKP04} where all knots up to a certain complexity are enumerated. Note that although many knots in the tables of Callahan--Dean--Weeks and Champanerkar--Kofman--Patterson have the same number of simplexes in the minimal triangulations, only very few of them share the same volume. For large volume the picture is different (see \cite{Millichap2013} for the recent results in dimension $3$ and \cite{Eme12} for higher dimensions and other symmetric spaces), but nevertheless, in practice, hyperbolic volume has proven to be very effective in distinguishing manifolds.

The main purpose of this report is to discuss what is currently known about the simplest (i.e. \emph{minimal volume}) hyperbolic $n$-manifolds and orbifolds. More information about this topic with a particular emphasis on a connection with hyperbolic reflection groups can be found in a recent survey paper by Kellerhals \cite{Kel14}.

The minimal volume problem for hyperbolic $n$-orbifolds goes back to the paper of Siegel \cite{Sieg45} where the general setup is described and the solution to the problem for $n = 2$ is given. In fact, the solution of the $2$-dimensional problem can be traced back to the earlier work of Hurwitz \cite{Hurw1893}, which is briefly mentioned in Siegel's paper. The qualitative solution to Siegel's problem in general was obtained by Kazhdan and Margulis in \cite{KM68} (the title of \cite{KM68} refers to a conjecture of Selberg about the existence of unipotent elements in non-uniform lattices which was also resolved in the same $5$-page paper). We are going to come back to the discussion of the Kazhdan--Margulis theorem in Section~\ref{mbel:sec5}, but before that we shall consider the sharp lower bounds for the volumes of \emph{arithmetic orbifolds}.

\medskip

\noindent{\bf Acknowledgements.}
I would like to thank Vincent Emery for the fruitful collaboration throughout the years which gave rise to an essential part of the results that are considered in this report. I thank Misha Kapovich for allowing me to include Proposition~\ref{prop_mu_n}. I also thank Matthew Stover for the comments on a preliminary version of the paper.

\section{Arithmeticity and volume}\label{mbel:sec2}

The group of isometries of the hyperbolic $n$-space is isomorphic to the real Lie group $\PO(n,1)$. Its subgroup of orientation preserving isometries corresponds to the identity component $H = \PO(n,1)^\circ$, which can be further identified with the matrix group $\SO_0(n,1)$~--- the subgroup of $\SO(n,1)$ that preserves the upper half space. We shall mainly consider \emph{orientable finite volume hyperbolic $n$-orbifolds}
$$ \cO = \Hy/\Gamma, \quad \Gamma \text{ is a lattice in } H.$$

Let $\GG$ be an algebraic group defined over a number field $k$ which admits an epimorphism $\phi: \GG(k\otimes_\QQ \RR)^\circ \to H$ whose kernel is compact. Then, by the Borel--Harish-Chandra theorem \cite{BorHC62}, $\phi(\GG(\cO_k))$ is a finite covolume discrete subgroup of $H$ (here and further on $\cO_k$ denotes the ring of integers of $k$). Such subgroups and all the subgroups of $H$ which are commensurable with them are called \emph{arithmetic lattices} (or \emph{arithmetic subgroups}), and the field $k$ is called their \emph{field of definition}.

It can be shown that to define all arithmetic subgroups of $H$ it is sufficient to consider only simply connected, absolutely simple $k$-groups $\GG$ of absolute type $\Bn_{n/2}$, if $n$ is even, or $\Dn_{(n+1)/2}$, if $n$ is odd. In this case $\GG(k\otimes_\QQ \RR) \cong \widetilde{H}\times K$, where $\widetilde{H} = \Spin(n,1)$ is the simply connected covering of $H$ and $K$ is a compact Lie group. We shall call such groups $\GG$ and corresponding fields $k$ \emph{admissible}. The Godement compactness criterion implies that for $n \ge 4$ the quotient $\Hy/\Gamma$ is noncompact if and only if it is defined over $k = \QQ$.

From the classification of semisimple algebraic groups \cite{Tits66} it follows that if $n$ is even then $\GG$ has to be the spinor group of a quadratic form of signature $(n,1)$ defined over a totally real field $k$, i.e. in even dimensions the arithmetic subgroups are commensurable with the groups of units of the quadratic forms. For odd $n$ there is another family of arithmetic subgroups corresponding to the groups of units of appropriate Hermitian forms over quaternion algebras. Moreover, if $n = 7$ there is also the third type of arithmetic subgroups of $H$ which are associated to the Cayley algebra.

The number theoretic local-global principle gives us a way to construct arithmetic lattices that is particularly suitable for the volume computations. Let $\Pa = (\Pa_v)_{v\in\Vf}$ be a collection of parahoric subgroups $\Pa_v \subset \GG(k_v)$, where $v$ runs through all finite places of $k$ and $k_v$ denotes the non-archimedean completion of the field (see e.g. \cite[Sec.~0.5]{Pra89} for the definition of parahoric subgroups).
The family $\Pa$ is called \emph{coherent} if $\prod_{v\in\Vf} \Pa_v$ is an open subgroup of the finite ad\`ele group $\GG(\Adels_f(k))$. Following \cite{Pra89}, the group
$$\Lambda = \GG(k) \cap \prod_{v\in\Vf} \Pa_v$$
is called the \emph{principal arithmetic subgroup} of $\GG(k)$ associated to $\Pa$.  We shall also call $\Lambda' = \phi(\Lambda)$ a principal arithmetic subgroup of $H$. This construction is motivated by a simple observation that the integers $\ZZ = \QQ \cap \prod_{p\text{ prime}}\ZZ_p$, where $\QQ$ is embedded diagonally into the product of $p$-adic fields $\prod_{p\text{ prime}}\QQ_p$, and can be understood as its generalization to the algebraic groups defined over number fields. We refer to the books by Platonov--Rapinchuk \cite{PlaRap94} and Witte Morris \cite{WittMorr08} for more material about arithmetic subgroups and their properties.

The Lie group $H$ carries a Haar measure $\mu$ that is defined uniquely up to a scalar factor. We can normalize $\mu$ so that the hyperbolic volume satisfies
$$\Vol(\Hy/\Gamma) = \mu(H/\Gamma).$$
The details of this normalization procedure are explained, for instance, in Section~2.1 of \cite{BelEme}. If $\Gamma$ is a principal arithmetic subgroup, its covolume can be effectively computed. The first computations of this kind can be traced back to the work of Smith, Minkowski and Siegel on masses of lattices in quadratic spaces. After the work of Kneser, Tamagawa and Weil these computations were brought into the framework of algebraic groups and number theory. More precisely, if $\Gamma$ is an arithmetic subgroup of $\GG$ defined over $k$, then its covolume can be expressed through the volume of $\GG(\Adels_k)/\GG(k)$ with respect to a volume form $\omega$ associated naturally to $\Gamma$, and one can relate $\omega$ to the Tamagawa measure of $\GG(\Adels_k)$ by virtue of certain local densities. Assuming that the Tamagawa number of $\GG$ is known, the computation of the covolume of $\Gamma$ is thus reduced to the computation of these local densities. The precise expressions of this form are known as the \emph{volume formulas}, among which we would like to mention the Gauss--Bonnet formula of Harder \cite{Harder71}, Borel's volume formula \cite{Bor81}, Prasad's formula \cite{Pra89}, and its motivic extension by Gross \cite{Gross97}. In his paper, Harder worked out an explicit formula for the split groups $\GG$ but in our case, if $n>3$, the corresponding algebraic groups are never split. In Borel's influential paper the case of the semisimple groups of type $\An_1$ is covered in full generality. This corresponds to the hyperbolic spaces of dimensions $2$ and $3$ and their products. Our primary interest lies in higher dimensions, where the computations can be carried out via Prasad's volume formula.

\medskip

Let us recall \emph{Prasad's formula} adapted to our setup. Let $\Lambda$ be a principal arithmetic subgroup of an admissible group $\GG/k$ associated to a coherent collection of parahoric subgroups~$\Pa$. Following \cite[Section 2.1]{BelEme}, assuming $\Lambda$ does not contain the center of $\GG$, we have
\begin{equation}
\mu(H/\Lambda')  = \Vol(\Sph) \cdot \disc_k^{\frac12\mathrm{dim}(\GG)}\left(\frac{\disc_\ell}{\disc_k^{[\ell:k]}}\right)^{\frac12s} \left(\prod_{i=1}^{r}\frac{m_i!}{(2\pi)^{m_i+1}}\right)^{[k:\QQ]} \tau_k(\GG)\:\cE(\Pa),
\end{equation}
where
\begin{itemize}
\smallskip\item[(i)] $\displaystyle\Vol(\Sph) = \frac{2\pi^{\frac{n+1}{2}}}{\Gamma(\frac{n+1}{2})}$ is the volume of the unit sphere in $\RR^{n+1}$;
\smallskip\item[(ii)] $\disc_K$ denotes the absolute value of the discriminant of the number field $K$;
\smallskip\item[(iii)] $\ell$ is a Galois extension of $k$ defined as in \cite[0.2]{Pra89}
(if $\GG$ is not a $k$-form of type $^6\Dn_4$, then $\ell$ is the splitting field of
the quasi-split inner $k$-form of $\GG$, and if $\GG$ is of type $^6\Dn_4$, then
$\ell$ is a fixed cubic extension of $k$ contained in the corresponding splitting
field; in all cases $[\ell:k]\le 3$);
\smallskip\item[(iv)] $\mathrm{dim}(\GG)$, $r$ and $m_i$ denote the dimension, rank and Lie exponents of $\GG$:
\begin{itemize}
 \item[-] if $n$ is even, then $r = \frac{n}2$, $\mathrm{dim}(\GG) = 2r^2+r$, and $m_i = 2i-1$ ($i = 1, \ldots, r$);
 \item[-] if $n$ is odd, then $r = \frac12(n+1)$, $\mathrm{dim}(\GG) = 2r^2-r$, and $m_i = 2i-1$ ($i = 1, \ldots, r-1$), $m_r = r-1$;
\end{itemize}
\smallskip\item[(v)] $s = 0$ if $n$ is even and $s = 2r-1$ for odd dimensions (cf. \cite[0.4]{Pra89});
\smallskip\item[(vi)] $\tau_k(\GG)$ is the Tamagawa number of $\GG$ over $k$ (since $\GG$ is simply connected and $k$ is a number field, $\tau_k(\GG) = 1$); and
\smallskip\item[(vii)] $\cE(\Pa) = \prod_{v\in V_f} e_v$ is an Euler product of the local densities $e_v = e(\Pa_v)$ which can be explicitly computed using Bruhat--Tits theory.
\end{itemize}
When $\Lambda$ contains the center of $\GG$ its covolume is twice the above value.

In even dimensions the right-hand side of the volume formula is related to the generalized Euler characteristic of the quotient (cf. \cite[Section~4.2]{BorPra89}) and we obtain a variant of the classical Gauss--Bonnet theorem.

\medskip

If $\cO = \Hy/\Gamma$ is a minimal volume hyperbolic orbifold then $\Gamma$ is a \emph{maximal lattice} in $H$. It is known that any maximal arithmetic subgroup $\Gamma$ can be obtained as the normalizer in $H$ of some principal arithmetic subgroup $\Lambda$, and that the index $[\Gamma:\Lambda]$ can be evaluated or estimated using Galois cohomology. We refer for more details and some related computations to the corresponding sections of \cite{Bel04}, \cite{Bel07} and \cite{BelEme}. The upshot is that this technique allows us to study the minimal volume arithmetic hyperbolic $n$-orbifolds using volume formulas.

\section{Minimal volume arithmetic hyperbolic orbifolds}\label{mbel:sec3}

The minimal volume $2$-orbifold corresponds to the Hurwitz triangle group $\Delta(2,3,7)$ (cf. \cite{Sieg45}). This group is arithmetic and defined over the cubic field $k = \QQ[\cos(\frac{\pi}{7})]$, which follows from Takeuchi's classification of arithmetic triangle groups \cite{Takeuchi77}. The smallest noncompact $2$-orbifold corresponds to the modular group $\PSL(2, \ZZ)$. In dimension $3$ the minimal covolume arithmetic subgroup was found by Chinburg and Friedman \cite{ChinbFried86}, who used Borel's volume formula \cite{Bor81}. Much later Gehring, Martin and Marshall showed that this group solves Siegel's minimal covolume problem in dimension $3$ \cite{GehrMart09, MarshMart12}. The noncompact hyperbolic $3$-orbifold of minimal volume was found by Meyerhoff \cite{Meyerh85}, it corresponds to the arithmetic Bianchi group $\PSL(2, \cO_3)$, with $\cO_3$ the ring of integers in $\QQ[\sqrt{-3}]$. For even dimensions $n \ge 4$ the minimal volume problem for arithmetic hyperbolic $n$-orbifolds was solved in my paper \cite{Bel04} (with addendum \cite{Bel07}). The odd dimensional case of this problem for $n \ge 5$ was studied by Emery in his thesis \cite{EmePhD} and appeared in our joint paper \cite{BelEme}. These results complete the solution of Siegel's problem for arithmetic hyperbolic $n$-orbifolds. We shall now review our work and discuss some corollaries.

The main results of \cite{Bel04}, \cite{Bel07}, \cite{BelEme} and \cite{EmePhD} can be summarized in two theorems:

\begin{theorem}
For every dimension $n \ge 4$, there exists a unique orientable compact arithmetic hyperbolic $n$-orbifold $\cO^n_0$ of the smallest volume. It is defined over the field $k_0 = \QQ[\sqrt{5}]$ and has $\Vol(\cO^n_0) = \omega_{c}(n)$.
\end{theorem}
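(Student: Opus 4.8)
The plan is to minimize the covolume given by Prasad's volume formula over all admissible data, reducing the problem to a controllable optimization over the arithmetic invariants appearing on the right-hand side. First I would observe that for the compact case in dimension $n\ge 4$ the group $\GG$ is the spinor group of a quadratic form of signature $(n,1)$ over a totally real field $k\neq\QQ$, so by the Godement criterion compactness is automatic; the splitting field $\ell$ and the invariant $s$ are then determined by the type, and the Lie-theoretic constants $\dim(\GG)$, $r$, $m_i$ are fixed by the formula in item (iv). Since $\tau_k(\GG)=1$, the only quantities that still depend on the arithmetic choices are $k$ (through $\disc_k$), the field $\ell$ (through the ratio $\disc_\ell/\disc_k^{[\ell:k]}$), and the local factor $\cE(\Pa)$. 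The strategy is to bound each of these from below separately and to show that a single choice simultaneously attains all the bounds.

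The key steps, in order, are as follows. First I would fix the dimension and write the covolume as an explicit monotone expression in the field discriminants and the Euler product, using that the archimedean $\Gamma$-factors and $\Vol(\Sph)$ are constants once $n$ is fixed. Second, I would pass from an arbitrary maximal lattice $\Gamma$ to the principal arithmetic subgroup $\Lambda$ it normalizes, controlling the index $[\Gamma:\Lambda]$ by Galois cohomology as indicated in Section~\ref{mbel:sec2}; this converts the covolume of $\Gamma$ into $\mu(H/\Lambda')$ divided by a bounded index, so that minimizing covolume becomes minimizing $\cE(\Pa)$ against the index gain. Third, and this is where the arithmetic input is decisive, I would invoke the discriminant bounds for totally real fields (Odlyzko-type estimates, refined by the Brauer--Siegel and explicit minorations) to show that as $\disc_k$ grows the volume grows faster than any possible saving in the local densities or the normalizer index can compensate; this isolates finitely many candidate fields, among which $k_0=\QQ[\sqrt5]$ has the smallest discriminant $\disc_{k_0}=5$. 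Fourth, with $k$ pinned down I would optimize the local factor $\cE(\Pa)=\prod_v e_v$ place by place, showing that each $e_v$ is minimized by a suitable choice of the local parahoric $\Pa_v$ (hyperspecial wherever possible) and that these local choices assemble into a coherent collection defining a genuine lattice.

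Finally, I would verify that the minimizing datum is unique up to conjugacy and compute the resulting value, setting $\omega_c(n)$ equal to the covolume it produces; the existence half follows by exhibiting the form over $k_0$ explicitly and checking that its spinor group together with the optimal parahorics yields an orientable compact orbifold. The main obstacle I anticipate is the fourth step interacting with the third: the local densities $e_v$ and the normalizer index $[\Gamma:\Lambda]$ both depend on ramification data at the finite places, so the bound on $\disc_k$ and the optimization of $\cE(\Pa)$ are genuinely coupled rather than independent, and one must show that the field with the smallest discriminant also admits a parahoric collection small enough that no larger-discriminant field can overtake it through a more favorable Euler product. Handling this coupling carefully, and in particular ruling out the finitely many competing small-discriminant fields by a direct comparison of the full volume expressions, is the technical heart of the argument; the even- and odd-dimensional cases diverge here because the factor $(\disc_\ell/\disc_k^{[\ell:k]})^{s/2}$ is trivial when $n$ is even but contributes an extra field-discriminant dependence when $n$ is odd, which is why the two dimension parities were settled in separate works.
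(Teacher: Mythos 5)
Your pipeline is, in outline, the one actually used to prove this theorem (which the survey only summarizes; the proofs are in \cite{Bel04,Bel07} for even $n$ and \cite{BelEme,EmePhD} for odd $n$): maximal lattices are normalizers of principal arithmetic subgroups, the index $[\Gamma:\Lambda]$ is controlled by Galois cohomology, covolumes of the $\Lambda$'s come from Prasad's formula, Odlyzko-type discriminant bounds reduce to finitely many candidate fields with $k_0=\QQ[\sqrt{5}]$ surviving, and the remaining data are compared directly. Your closing remark about the coupling between $\disc_k$, the local densities $e_v$, and the normalizer index is also accurate; indeed the optimal local data are not hyperspecial at every place in all cases (the case distinction in \eqref{mbel:sec3:f1} according to the parity of $r$ for $n=2r$ reflects exactly this), and in the odd-dimensional compact case the minimizer is the normalizer with $[\Gamma_0^{2r-1}:\Lambda_0^{2r-1}]=2$, not the principal subgroup itself, so the index gain genuinely competes with the Euler product.

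The genuine gap is in your first step. You assert that in the compact case $\GG$ must be the spinor group of a quadratic form over a totally real field $k\neq\QQ$. That is correct only for even $n$ (type $\Bn_{n/2}$, where $\ell=k$ and $s=0$). For odd $n$, the classification \cite{Tits66} recalled in Section~\ref{mbel:sec2} admits a second family of admissible groups~--- units of skew-Hermitian forms over quaternion algebras~--- and for $n=7$ a third, trialitarian family of type ${}^{6}\Dn_4$ attached to the Cayley algebra. As written, your minimization ranges over a proper subset of the commensurability classes and therefore cannot establish minimality, let alone uniqueness, of $\cO_0^n$ in odd dimensions: one must separately bound the covolumes of the quaternionic and trialitarian forms from below and show they always exceed $\omega_c(n)$, which is a substantive portion of \cite{BelEme} and \cite{EmePhD}. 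Two further points are underdeveloped rather than wrong: for odd $n$ the optimization over the field $\ell$ through the factor $\left(\disc_\ell/\disc_k^{[\ell:k]}\right)^{s/2}$ is itself a discriminant-minimization problem (the winner being the quadratic extension $\ell_0/k_0$ with $\ell_0$ defined by $x^4-x^3+2x-1$), and uniqueness requires more than ``the minimizing datum is unique up to conjugacy''~--- one must count conjugacy classes of maximal lattices realizing the minimal covolume, again via Galois cohomology and class-number-type arguments, to conclude there is exactly one minimal orbifold.
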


\begin{theorem}
For every dimension $n \ge 4$, there exists a unique orientable noncompact arithmetic hyperbolic $n$-orbifold $\cO^n_1$ of the smallest volume. It is defined over the field $k_1 = \QQ$ and has $\Vol(\cO^n_1) = \omega_{nc}(n)$.
\end{theorem}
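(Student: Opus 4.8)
The plan is to carry out the same optimization of Prasad's formula as in the compact case, but with the field of definition forced to be $\QQ$. By the Godement compactness criterion recalled in Section~\ref{mbel:sec2}, for $n\ge 4$ a noncompact arithmetic hyperbolic $n$-orbifold must be defined over $k=\QQ$; this already fixes $k_1=\QQ$. Substituting $\disc_\QQ=1$, $[\QQ:\QQ]=1$ and $\tau_\QQ(\GG)=1$ into the volume formula collapses it to
\[
\mu(H/\Lambda') = \Vol(\Sph)\,\disc_\ell^{\frac12 s}\prod_{i=1}^r \frac{m_i!}{(2\pi)^{m_i+1}}\,\cE(\Pa),
\]
so that the only remaining freedom lies in the admissible $\QQ$-group $\GG$ (equivalently, the relevant quadratic or Hermitian form, constrained to be isotropic over $\QQ$ so that the quotient is noncompact), the field $\ell$, and the coherent parahoric collection $\Pa$.

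Since a minimal volume orbifold corresponds to a maximal lattice, I would write $\Gamma = N_H(\Lambda')$ for a principal arithmetic subgroup $\Lambda'$ and minimize
\[
\Vol(\cO) = \frac{\mu(H/\Lambda')}{[\Gamma:\Lambda']},
\]
keeping track of the factor of $2$ that appears precisely when $\Lambda$ contains the center of $\GG$. Following the Galois-cohomological analysis of Borel--Prasad \cite{BorPra89} and the computations in \cite{Bel04} and \cite{BelEme}, the index $[\Gamma:\Lambda']$ factors as a global contribution (substantially simplified over $\QQ$, where the class number is one) times a product of local indices $[\Gamma_v:\Lambda_v]$ determined by the local types of the $\Pa_v$. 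The essential tension is that replacing a parahoric by a more special one simultaneously enlarges the local density $e_v$, raising $\mu(H/\Lambda')$, and enlarges the local index, lowering the volume, so the two effects have to be balanced prime by prime.

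The argument then splits according to the parity of $n$. For even $n$ the group is of type $\Bn_{n/2}$, which is inner, so $s=0$ and the discriminant factor is identically $1$; one is left to minimize $\cE(\Pa)$ against the index alone. For odd $n$ the group is of type $\Dn_{(n+1)/2}$, the factor $\disc_\ell^{\frac12 s}$ with $s=2r-1$ is genuinely present, and there are several admissible families to compare --- spinor groups of quadratic forms, unitary groups of quaternionic Hermitian forms, and, when $n=7$, the triality form tied to the Cayley algebra. Here I would first determine the minimal admissible $\ell$ for each family: in the quadratic-form case a signature $(n,1)$ form necessarily has non-square (indeed negative) discriminant, forcing $\ell$ to be an imaginary quadratic field and reducing the problem to choosing the one of smallest discriminant compatible with isotropy, while the quaternionic and triality families impose their own ramification constraints. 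In every case the local factors $e_v$ are computed from Bruhat--Tits theory, and bounding each ratio $e_v/[\Gamma_v:\Lambda_v]$ from below determines the optimal parahoric at each prime --- hyperspecial at all but finitely many places, with a specific non-hyperspecial choice at the bad primes.

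Assembling the local minima produces the explicit constant $\omega_{nc}(n)$ together with a distinguished group $\GG/\QQ$ and coherent collection $\Pa$, hence the orbifold $\cO^n_1$. Proving \emph{uniqueness} then amounts to showing that the local inequalities are strict away from this configuration, so that no other admissible form, field $\ell$, or parahoric collection attains the bound. I expect this last point to be the main obstacle: the delicate place-by-place comparison of $e_v$ with the normalizer index at the small primes $p=2$ and $p=3$, together with, in odd dimensions, the coupling between the choice of $\ell$ and the local densities, where apparent ties between genuinely different arithmetic groups must be excluded in order to upgrade existence of a minimum to uniqueness.
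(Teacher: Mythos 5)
Your overall blueprint matches how the result is actually established: the paper itself contains no proof of this theorem --- it is a survey, and the statement summarizes \cite{Bel04}, \cite{Bel07}, \cite{BelEme} and \cite{EmePhD} --- but the method it outlines in Section~\ref{mbel:sec2}, which is the method of those papers, is exactly your plan. The Godement criterion pins $k_1 = \QQ$, Prasad's formula collapses as you write, maximal lattices are normalizers of principal arithmetic subgroups with the index $[\Gamma:\Lambda']$ controlled by the Galois-cohomological machinery of Borel--Prasad, and the minimization is a place-by-place balancing of the local densities $e_v$ against the local indices, split by the parity of $n$, with the quaternionic (and, for $n = 7$, trialitarian) families compared separately.

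However, one step of your odd-dimensional analysis is wrong and would exclude the actual minimizer in half of the odd dimensions. You assert that a signature $(n,1)$ quadratic form over $\QQ$ ``necessarily has non-square (indeed negative) discriminant, forcing $\ell$ to be an imaginary quadratic field.'' The \emph{determinant} of such a form is indeed negative, but the field $\ell$ is determined by the signed discriminant $(-1)^r\det(f)$ of a form in $2r$ variables, and this can perfectly well be a square. For the minimizing data recorded in the paper one has $d = -1$ when $r$ is odd, so $(-1)^r\cdot(-1) = 1$: the group is an \emph{inner} form of type $\Dn_r$ with $\ell = \QQ$, which is precisely why the entries of \eqref{mbel:sec3:f2} for $r \equiv 1, 3 \pmod 4$ contain $\zeta(r)$ and no quadratic $L$-function, and why $\ell_1 = \QQ[\sqrt{-3}]$ (with $d = -3$) appears only when $r$ is even. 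A search restricted to outer forms with $\ell$ imaginary quadratic would therefore miss the optimum whenever $r$ is odd; the correct comparison must run over both inner and outer $\QQ$-forms, with the further case division by $r \bmod 4$ arising from the local analysis at $p = 2$. Apart from this, your identification of the delicate points --- strictness of the local inequalities at the small primes, and in odd dimensions the coupling between $\ell$ and the local densities, needed to upgrade existence of the minimum to uniqueness --- is accurate and consistent with the cited proofs.
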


The values of the minimal volume are as follows:
\begin{equation}\label{mbel:sec3:f1}
\omega_{c}(n) = \left\{
\begin{array}{ll}
\frac{4 \cdot 5^{r^2+r/2} \cdot (2\pi)^r}{(2r-1)!!} \prod_{i=1}^{r}\frac{(2i-1)!^2}{(2\pi)^{4i}}\zeta_{k_0}(2i),
&\text{if } n= 2r,\ r \text{ even};\\
\frac{2 \cdot 5^{r^2+r/2} \cdot (2\pi)^r \cdot (4r-1)}{(2r-1)!!} \prod_{i=1}^{r}\frac{(2i-1)!^2}{(2\pi)^{4i}}\zeta_{k_0}(2i),
&\text{if } n= 2r,\ r \text{ odd};\\
\frac{5^{r^2-r/2} \cdot 11^{r-1/2} \cdot (r-1)!}{2^{2r-1} \pi^r} \; L_{\ell_0|k_0}\!(r) \; \prod_{i=1}^{r-1} \frac{(2i -1)!^2}{(2\pi)^{4i}} \zeta_{k_0}(2i),
&\text{if } n= 2r - 1;\\
\end{array}
\right.
\end{equation}

\begin{equation}\label{mbel:sec3:f2}
\omega_{nc}(n) = \left\{
\begin{array}{ll}
\frac{4 \cdot (2\pi)^r}{(2r-1)!!} \prod_{i=1}^{r}\frac{(2i-1)!}{(2\pi)^{2i}}\zeta(2i),
& \text{if } n= 2r,\ r\equiv 0,\;1 \text{(mod\ 4)};\\
\frac{2 \cdot (2^r-1) \cdot (2\pi)^r}{(2r-1)!!} \prod_{i=1}^{r}\frac{(2i-1)!}{(2\pi)^{2i}}\zeta(2i),
& \text{if } n= 2r,\ r\equiv 2,\;3 \text{(mod\ 4)};\\
\frac{3^{r-1/2}}{2^{r-1}} \; L_{\ell_1|\QQ}\!(r) \; \prod_{i=1}^{r-1} \frac{(2i -1)!}{(2 \pi)^{2i}} \zeta(2i),
& \text{if } n= 2r-1,\ r \text{ even};\\
\frac{1}{2^{r-2}} \; \zeta(r) \; \prod_{i=1}^{r-1} \frac{(2i -1)!}{(2\pi)^{2i}} \zeta(2i),
& \text{if } n= 2r-1,\ r \equiv 1 \text{(mod\ 4)};\\
\frac{(2^r-1)(2^{r-1}-1)}{3 \cdot 2^{r-1}} \; \zeta(r) \; \prod_{i=1}^{r-1} \frac{(2i -1)!}{(2 \pi)^{2i}} \zeta(2i),
& \text{if } n= 2r-1,\ r \equiv 3 \text{(mod\ 4)}.
\end{array}
\right.
\end{equation}
\smallskip

\noindent
(The fields $\ell_0$ and $\ell_1$ are defined by $\ell_1 = \QQ[\sqrt{-3}]$  and $\ell_0$ is the quartic field with a defining polynomial $x^4-x^3+2x-1$. The functions $\zeta_K(s)$, $L_{L|K}(s)$ and $\zeta(s)$ denote the Dedekind zeta function of a field $K$, the Dirichlet $L$-function associated to a quadratic field extension $L/K$, and the Riemann zeta function, respectively.)

The groups $\Gamma_0^n$ and $\Gamma_1^n$ can be described as the normalizers of stabilizers of integral lattices in quadratic spaces $(V, f)$,
$$ f = d x_0^2 + x_1^2 + \ldots x_n^2,$$
where for $\Gamma_0^n$ we take $d =  - \frac12(1+\sqrt{5})$ if $n$ is even and $d = (-1)^r 3 - 2\sqrt{5}$ if $n = 2r-1$ is odd, and for $\Gamma_1^n$ we have $d = -1$ except for the case when $n = 2r - 1$ is odd and $r$ is even where $d = -3$. In even dimensions the stabilizers of the lattices under consideration appear to be maximal discrete subgroups so the index $[\Gamma_i^{2r}:\Lambda_i^{2r}] = 1$ and $\Gamma_i^{2r}$ are principal arithmetic subgroups ($i = 0,1$). In the odd dimensional case the index is equal to $2$ except $[\Gamma^{2r-1}_1:\Lambda^{2r-1}_1] = 1$ when $r = 2m + 1$ with $m$ even. In the noncompact case the corresponding covolumes (without proof of minimality) were previously computed by Ratcliffe and Tschantz (cf. \cite{RatTsch97, RatTsch13}), who achieved this by explicitly evaluating the limit in the classical Siegel's volume formula \cite{Sieg35, Sieg36}.

The formulas \eqref{mbel:sec3:f1}--\eqref{mbel:sec3:f2} look scary but they are explicit and can be applied for computation and estimation of the volumes. As an example of such computation we can study the growth of minimal volume depending on the dimension of the space. Figure~\ref{mbel:graph} presents a graph of the logarithm of the minimal volume of compact/noncompact arithmetic orbifolds in dimensions $n < 30$. The logarithmic graph of the Euler characteristic in even dimensions has a similar shape. By analyzing this image we come up with several interesting corollaries which can be then confirmed analytically (as was done in \cite{Bel04} and \cite{BelEme}):

\begin{corollary}\label{mbel:sec3:cor}
The minimal volume decreases with n till $n = 8$ (resp. $n = 17$) in the compact (resp. noncompact) case. After this it starts to grow eventually reaching a very fast super-exponential growth. For every dimension $n\ge 5$, the minimal volume of a noncompact arithmetic hyperbolic $n$-orbifold is smaller than the volume of any compact arithmetic hyperbolic $n$-orbifold. Moreover, the ratio
between the minimal volumes $\Vol(\cO^n_0)/\Vol(\cO^n_1)$ grows super-exponentially with $n$.
\end{corollary}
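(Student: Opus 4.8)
The plan is to treat all three assertions as elementary analytic statements about the explicit functions $\omega_c(n)$ and $\omega_{nc}(n)$ recorded in \eqref{mbel:sec3:f1}--\eqref{mbel:sec3:f2}. What makes this feasible is that every special-value factor occurring there is trapped between absolute positive constants: for real $s\ge2$ one has $1<\zeta(s)\le\zeta(2)$ and likewise $1<\zeta_{k_0}(s)\le\zeta_{k_0}(2)$, while the Dirichlet $L$-values stay between two fixed positive constants, all of these Euler products tending to $1$ as $s\to\infty$. Moreover the infinite products $\prod_i\zeta(2i)$, $\prod_i\zeta_{k_0}(2i)$ and $\prod_i L_{k_0|\QQ}(2i)$ converge. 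Consequently the arithmetic factors perturb the volume only by a bounded multiplicative amount, and the behaviour of the volume in $n$ is governed entirely by the factorial terms $(2i-1)!$, the powers of $2\pi$, and the discriminant powers of $5$, $3$ and $11$.

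For the super-exponential growth I would pass to logarithms. Writing $n=2r$ (the odd case is identical after dropping the final factor), the dominant contribution to $\log\omega_{nc}(n)$ is $\sum_{i=1}^r\log\bigl((2i-1)!\bigr)$, and in the compact case it is twice this. By Stirling $\log\bigl((2i-1)!\bigr)=2i\log(2i)+O(i)$, whence
\begin{equation*}
\sum_{i=1}^{r}\log\bigl((2i-1)!\bigr)=r^{2}\log r+O(r^{2}).
\end{equation*}
All the remaining factors contribute only $O(r^2)$ to the logarithm: the powers of $2\pi$ give $-c\,r^2$, the powers of $5$ give $+c'\,r^2$, the elementary prefactors such as $(2r-1)!!$, $(2^r-1)$ and $11^{r-1/2}$ contribute $O(r\log r)$, and the bounded special values contribute $O(r)$. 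Hence $\log\omega(n)$ is of order $n^2\log n$ in both families, which is faster than linear in $n$; equivalently $\omega(n)^{1/n}\to\infty$, i.e. the growth is super-exponential.

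Locating the minimum is the step I expect to be the real obstacle, because \eqref{mbel:sec3:f1}--\eqref{mbel:sec3:f2} change their algebraic shape with the parity of $n$ and with the residue of $r$ modulo $4$. The strategy is to study the one-step increment $\log\omega(n+1)-\log\omega(n)$ and to show that, across all of these cases, it changes sign exactly once, from negative to positive, at $n=8$ in the compact family and at $n=17$ in the noncompact family. For large $r$ this increment is dominated by a newly appearing factorial factor --- roughly an extra $\log\bigl((2r-1)!\bigr)$, doubled in the compact case --- which is positive and increasing, so the volume is eventually strictly increasing; near the transition I would instead insert the explicit numerical values of the relevant zeta and $L$-values, which is precisely where the thresholds $8$ and $17$ emerge. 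The bookkeeping across the parity and modulo-$4$ cases, together with a proof that the increment has no second sign change, is the only genuinely laborious part, but it is finite and entirely explicit.

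For the comparison between the two families I would form the ratio $R(n)=\Vol(\cO^n_0)/\Vol(\cO^n_1)=\omega_c(n)/\omega_{nc}(n)$ and read its structure off \eqref{mbel:sec3:f1}--\eqref{mbel:sec3:f2}. Since $\cO^n_0$ is defined over $k_0=\QQ[\sqrt5]$ whereas $\cO^n_1$ is defined over $\QQ$, the ratio retains a discriminant power $5^{r^2\pm r/2}$ (with an extra power of $11$ in odd dimensions), a factorial product $\prod_i(2i-1)!/(2\pi)^{2i}$ whose logarithm again grows like $r^2\log r$, a convergent product of ratios of zeta and $L$-values that stays between two positive constants, and a bounded elementary prefactor. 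Therefore $\log R(n)$ grows like $r^2\log r$ and tends to $\infty$, which yields the super-exponential growth of the ratio. The inequality $\omega_{nc}(n)<\omega_c(n)$ for every $n\ge5$ then follows because these growing factors dominate once $n$ is moderate, while the finitely many small dimensions are checked numerically; that the inequality first holds exactly at $n=5$ is consistent with $R(4)<1<R(5)$.
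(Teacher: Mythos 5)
Your proposal is correct and takes essentially the same route as the paper: the survey offers no separate argument but reads the corollary off the explicit formulas \eqref{mbel:sec3:f1}--\eqref{mbel:sec3:f2} (``confirmed analytically, as was done in \cite{Bel04} and \cite{BelEme}''), and the analysis in those references is precisely yours --- special values trapped between absolute constants, Stirling making $\log\omega(n)$ of order $n^2\log n$ so that $\omega(n)^{1/n}\to\infty$, one-step comparisons across the parity and congruence cases, and explicit numerics locating the thresholds $n=8$, $n=17$ and the crossover at $n=5$. The only blemish is notational: \eqref{mbel:sec3:f1}--\eqref{mbel:sec3:f2} contain the single values $L_{\ell_0|k_0}(r)$ and $L_{\ell_1|\QQ}(r)$ rather than a product $\prod_i L(2i)$, which if anything simplifies your boundedness claim.
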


\begin{figure}[!ht]
\centering
\psfig{file=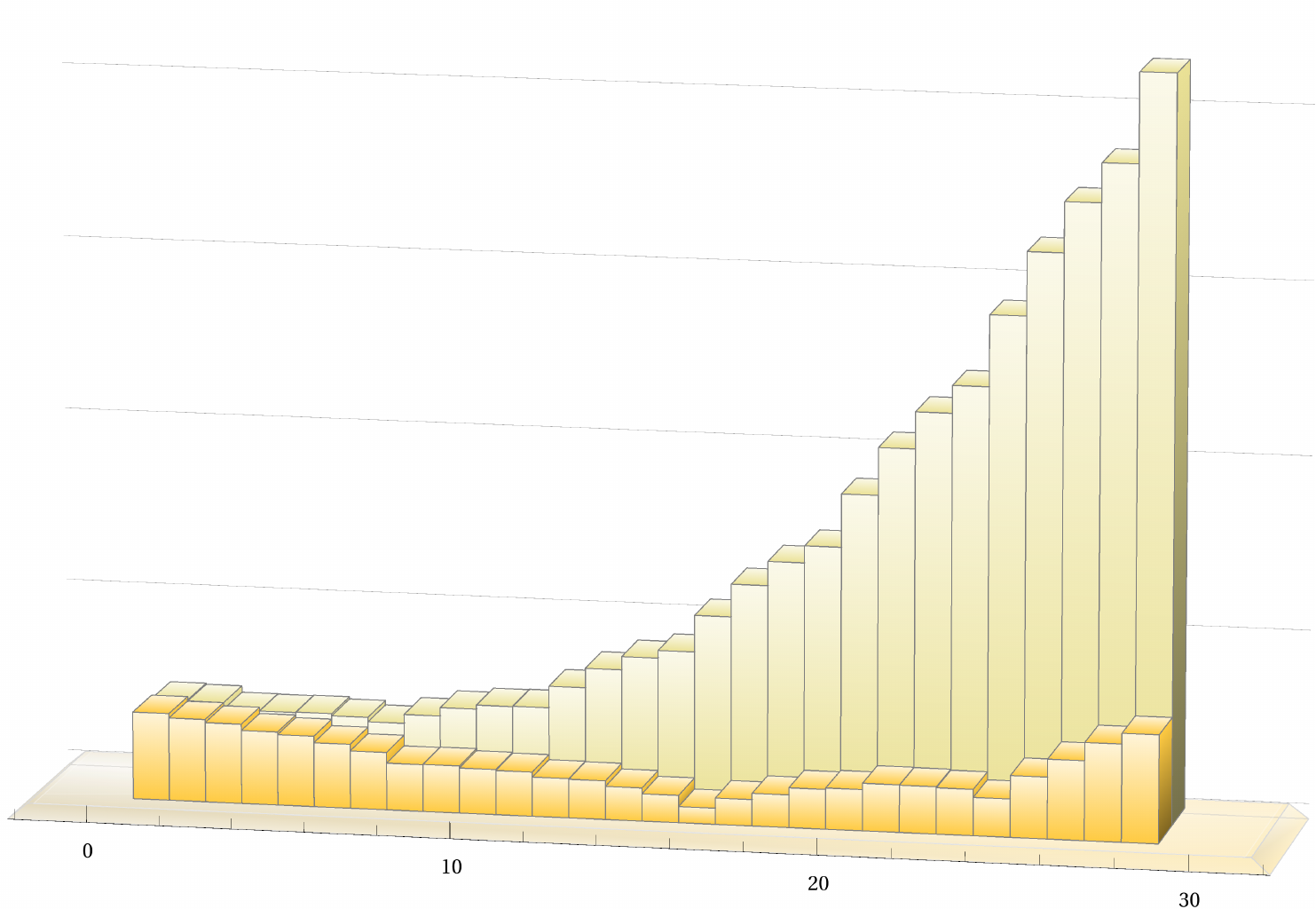, scale=0.75}
\caption{\label{mbel:graph} The logarithm of the minimal volume of noncompact (front) and compact (back) arithmetic hyperbolic $n$-orbifolds for $n = 2, 3, \ldots, 29$.}
\end{figure}

Another interesting corollary of the growth of minimal volume was obtained by Emery:
\begin{theorem}[Emery, \cite{Eme-Euler2}]
For $n > 4$ there is no compact arithmetic hyperbolic $n$-manifold $\cM$ with $|\chi(\cM)| = 2$.
\end{theorem}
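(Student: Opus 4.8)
The plan is to turn the statement about Euler characteristics into one about volumes and then exploit the torsion of maximal lattices together with Prasad's formula. A closed $n$-manifold of odd dimension has $\chi=0$, so only even $n=2r\ge 6$ can occur; assume this. By the Gauss--Bonnet formula for hyperbolic orbifolds, $|\chi(\cM)|=2\,\Vol(\cM)/\Vol(\Sph)$, so the hypothesis $|\chi(\cM)|=2$ is equivalent to $\Vol(\cM)=\Vol(\Sph)$. (For $n\equiv 2\pmod 4$ Poincar\'e duality already forces $\chi(\cM)$ to be even, so $2$ is the first value one would need to rule out.) Thus it suffices to prove that no torsion-free cocompact arithmetic lattice $\Gamma<H$ has covolume exactly $\Vol(\Sph)$ when $n=2r\ge 6$.

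First I would pass to a maximal lattice. Write $\cM=\Hy/\Gamma$ with $\Gamma$ torsion-free and choose a maximal arithmetic lattice $\bar\Gamma\supseteq\Gamma$, with compact orbifold $\bar\cO=\Hy/\bar\Gamma$, so that $\chi(\cM)=[\bar\Gamma:\Gamma]\,\chi_{\mathrm{orb}}(\bar\cO)$. Since $\Gamma$ is torsion-free, every finite subgroup $F<\bar\Gamma$ acts freely on the coset space $\bar\Gamma/\Gamma$ (if $g^{-1}fg\in\Gamma$ then the finite-order element $g^{-1}fg$ is trivial), whence $|F|$ divides $[\bar\Gamma:\Gamma]$. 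Therefore $L(\bar\Gamma):=\mathrm{lcm}\{|F|:F<\bar\Gamma\text{ finite}\}$ divides the index and
\[
 |\chi(\cM)| \;=\; [\bar\Gamma:\Gamma]\,\bigl|\chi_{\mathrm{orb}}(\bar\cO)\bigr| \;\ge\; L(\bar\Gamma)\,\bigl|\chi_{\mathrm{orb}}(\bar\cO)\bigr|.
\]
Hence the theorem follows once one establishes
\[
 (\star)\qquad L(\bar\Gamma)\,\bigl|\chi_{\mathrm{orb}}(\bar\cO)\bigr| \;>\; 2 \quad\text{for every maximal } \bar\Gamma,\ n=2r\ge 6.
\]

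To verify $(\star)$ I would split according to the size of $\bigl|\chi_{\mathrm{orb}}(\bar\cO)\bigr|=2\,\mu(H/\bar\Gamma)/\Vol(\Sph)$. Since $\omega_{c}(n)$ is the minimal volume of a compact arithmetic hyperbolic $n$-orbifold, $\mu(H/\bar\Gamma)\ge\omega_{c}(n)$, while $\Vol(\Sph)=2\pi^{(n+1)/2}/\Gamma(\tfrac{n+1}2)\to 0$ and, by Corollary~\ref{mbel:sec3:cor}, $\omega_{c}(n)$ grows super-exponentially. So there is an explicit threshold $N_0$ with $\omega_{c}(n)>\Vol(\Sph)$ for all even $n\ge N_0$; for these $\bigl|\chi_{\mathrm{orb}}(\bar\cO)\bigr|>2$ already and $(\star)$ holds without even invoking torsion. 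This leaves the finitely many even dimensions $6\le n<N_0$. For each, $(\star)$ can fail only for classes with $\mu(H/\bar\Gamma)\le\Vol(\Sph)$, and through the factor $\disc_k^{\frac12\dim(\GG)}$ in \eqref{mbel:sec3:f1} this bounds the discriminant of the field of definition $k$ (hence, by the Odlyzko bounds, restricts $k$ to a finite list) and the admissible form, leaving only finitely many commensurability classes to inspect.

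For each surviving class the computation is to combine the exact rational value of $\chi_{\mathrm{orb}}(\bar\cO)$ with a lower bound for $L(\bar\Gamma)$. Via the functional equations and the Siegel--Klingen theorem the special values appearing in \eqref{mbel:sec3:f1} are rational, so $\chi_{\mathrm{orb}}(\bar\cO)$ is a genuine rational number $p/q$; simultaneously the orders of the finite subgroups of $\bar\Gamma$ are governed by the parahoric data entering $\cE(\Pa)$ through Bruhat--Tits theory (the vertex stabilizers contain large finite reflection groups), yielding a concrete lower bound for $L(\bar\Gamma)$, and one checks $L(\bar\Gamma)\,|p|/q>2$. I expect the main obstacle to be precisely this last step in the small even dimensions, where $\bigl|\chi_{\mathrm{orb}}(\bar\cO)\bigr|$ is extremely small (of order $10^{-5}$ already in dimension $6$): one must show that the forced torsion $L(\bar\Gamma)$ is correspondingly large, which requires a sharp description of the maximal finite subgroups of the minimal and near-minimal lattices together with tight control of the special $L$-values. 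The super-exponential growth disposes of all large $n$ cleanly, so the theorem reduces to this finite but delicate arithmetic verification.
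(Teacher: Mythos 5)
Your overall dichotomy is the right one, and it matches how the survey frames the result: the paper gives no proof here, but remarks explicitly that for $n > 10$ the theorem follows from the minimal-volume results of \cite{Bel04} underlying Corollary~\ref{mbel:sec3:cor}, while ``smaller dimensions are harder and require more careful analysis of the Euler characteristic of arithmetic subgroups'' --- that analysis being precisely the content of Emery's paper \cite{Eme-Euler2}. Your preliminary steps are all sound: odd $n$ is vacuous, Gauss--Bonnet converts the hypothesis into $\Vol(\cM) = \Vol(\Sph)$, passing to a maximal arithmetic lattice $\bar\Gamma \supseteq \Gamma$ is the standard move, the observation that any finite $F < \bar\Gamma$ acts freely on $\bar\Gamma/\Gamma$ (so that the lcm of torsion orders divides $[\bar\Gamma:\Gamma]$) is correct, and the discriminant bound cutting the problem down to finitely many commensurability classes is standard. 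The super-exponential growth of $\omega_c(n)$ against the decay of $\Vol(\Sph)$ does dispose of all large $n$, in agreement with the survey's $n > 10$ remark.

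The genuine gap is that your claim $(\star)$ is exactly the hard part of the theorem, and you leave it unverified --- as you concede yourself. Moreover, $(\star)$ is strictly stronger than what is needed and is not known to hold as you state it: nothing in your argument excludes a small-covolume maximal lattice $\bar\Gamma$ whose finite subgroups are small (a priori even a torsion-free maximal lattice), in which case $L(\bar\Gamma)\,|\chi_{\mathrm{orb}}(\bar\cO)|$ could be $\le 2$ even though the theorem is true. The analysis in \cite{Eme-Euler2} is finer than an lcm-of-torsion lower bound: from $|\chi(\cM)| = 2$ one gets the exact equation $\chi_{\mathrm{orb}}(\bar\cO) = \pm 2/[\bar\Gamma:\Gamma]$, so the numerator of the rational number $\chi_{\mathrm{orb}}(\bar\cO)$ in lowest terms must divide $2$, and one then rules this out in the remaining dimensions $6 \le n \le 10$ using Prasad's formula together with control of the index $[\bar\Gamma:\Lambda]$ over the underlying principal arithmetic subgroup (via Galois cohomology) and of the local factors $\cE(\Pa)$. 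By contrast, your plan requires, for the smallest classes in dimension $6$, a certified torsion bound $L(\bar\Gamma)$ of order $10^5$, and gesturing at ``large finite reflection groups in vertex stabilizers'' does not establish this uniformly over all maximal lattices in all surviving commensurability classes. So what you have is a correct reduction plus a plausible program, not a proof; the delicate arithmetic verification you defer is the theorem's actual substance.
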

In particular, there do not exist arithmetically defined hyperbolic rational homology $n$-spheres with $n$ even and bigger than $4$. We can remark that for $n > 10$ this theorem follows from the results in \cite{Bel04} pertaining to Corollary~\ref{mbel:sec3:cor}, but smaller dimensions are harder and require more careful analysis of the Euler characteristic of arithmetic subgroups.

\medskip

It is conjectured that all results in this section are true without assuming arithmeticity. We shall discuss this conjecture more carefully in Section~\ref{mbel:sec5}.

\section{Minimal volume manifolds and cusps}\label{mbel:sec4}

An interesting and somewhat surprising corollary of the results about the minimal volume arithmetic hyperbolic $n$-orbifolds is that for $n \ge 5$ the minimum is attained on \emph{noncompact} hyperbolic $n$-orbifolds. In a joint work with Emery we observed that the picture becomes even more interesting when we restrict our attention to manifolds. As a result we came up with a conjecture \cite{BelEme13}:

\begin{conjecture} \label{mbel:sec4:conj}
 Let $\mathcal{M}$ be a compact hyperbolic manifold of dimension $n\neq 3$. Then there exists a noncompact hyperbolic $n$-manifold $\mathcal{M}_1$ whose volume is smaller than the volume of $\mathcal{M}$.
\end{conjecture}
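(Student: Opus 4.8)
The plan is to reduce the conjecture to the single inequality $V_{nc}(n) < V_c(n)$ between the minimal volume $V_{nc}(n)$ of a noncompact finite-volume hyperbolic $n$-manifold and the minimal volume $V_c(n)$ of a compact one (the infimum $V_c(n)$ is positive by the Kazhdan--Margulis theorem). Indeed, every compact $\cM$ has $\Vol(\cM) \ge V_c(n)$, so as soon as one exhibits a single noncompact $n$-manifold $\cM_1$ with $\Vol(\cM_1) < V_c(n)$, this $\cM_1$ beats an arbitrary compact $\cM$. This reformulation also explains why $n = 3$ must be excluded: in dimension $3$ the smallest closed orientable manifold (the Weeks manifold, volume $\approx 0.9427$) is strictly smaller than the smallest cusped one (the figure-eight knot complement, volume $\approx 2.0299$), so the inequality runs the wrong way and the statement is false.

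For even $n$ I would exploit the Gauss--Bonnet formula. A closed hyperbolic $n$-manifold has $\chi \neq 0$ and $\chi \in \ZZ$, whence $\Vol(\cM) \ge \frac12 \Vol(\Sph)$; moreover, when $n \equiv 2 \pmod 4$ Poincar\'e duality makes the middle intersection form skew-symmetric and forces $\chi$ to be even, so $V_c(n) \ge \Vol(\Sph)$. A cusped $n$-manifold has odd-dimensional flat cusp cross-sections, which contribute zero Euler characteristic, so Gauss--Bonnet applies unchanged and $\Vol = \frac12 \Vol(\Sph)\,|\chi|$ there as well. Thus in these dimensions the conjecture becomes the topological problem of building a noncompact $n$-manifold with $|\chi| = 1$: for $n \equiv 2 \pmod 4$ such a manifold has volume $\frac12\Vol(\Sph) < \Vol(\Sph) \le V_c(n)$ and we are done (the once-punctured torus is the model case $n = 2$). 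The residue class $n \equiv 0 \pmod 4$, already $n = 4$, is the delicate one, since compact manifolds with odd $\chi$ are a priori allowed; here one must rule out a hypothetical compact rational homology $n$-sphere, which is precisely the circle of questions settled in the arithmetic range by Emery's theorem quoted above.

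For odd $n$ the Euler characteristic vanishes, so I would instead build on the arithmetic minimal-volume results. Corollary~\ref{mbel:sec3:cor} yields $\omega_{nc}(n) < \omega_c(n)$ for the orbifold minima when $n \ge 5$, and the task is to transport this to the manifold minima $V_{nc}(n)$ and $V_c(n)$. The two ingredients are: (a) a lower bound for $V_c(n)$, obtained by noting that any compact manifold covers a compact orbifold and invoking a universal Kazhdan--Margulis bound, sharpened in the arithmetic case by the two minimal-volume theorems above; and (b) an upper bound for $V_{nc}(n)$, obtained by passing from the minimal noncompact orbifold $\cO_1^n$ to an explicit torsion-free subgroup of finite index in $\SO(f,\ZZ)$ with $f = -x_0^2 + x_1^2 + \dots + x_n^2$ --- for instance a principal congruence subgroup $\Gamma(N)$ --- whose covolume is $[\Gamma_1^n : \Gamma(N)]\cdot \omega_{nc}(n)$ and is computable through Prasad's volume formula.

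The hard part will be step (b): keeping the constructed manifold small. Passing to a torsion-free subgroup generally costs an index that grows with the primes dividing the torsion orders, and this index can inflate the noncompact volume beyond the compact lower bound, reversing the desired inequality. Controlling the torsion-free index uniformly in $n$ --- ideally bounding it, e.g. working with a single small level $N$ and estimating the local factors $e_v$ --- is the crux, and it is far from clear that one efficient construction succeeds in every dimension. A second obstacle lies on the compact side: outside low dimensions the true value of $V_c(n)$ over all (including non-arithmetic) hyperbolic manifolds is unknown, so one must either grant the arithmeticity conjecture discussed in Section~\ref{mbel:sec5} or replace it with an independent lower bound strong enough to be undercut by the cusped example. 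It is exactly the simultaneous control of these two quantities that keeps the statement a conjecture rather than a theorem.
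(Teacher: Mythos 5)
First, the framing: the statement you were given is Conjecture~\ref{mbel:sec4:conj}, and the paper contains no proof of it --- only a reduction of the known cases and a pointer to the partial result of \cite{BelEme13} (arithmetic manifolds, $n \ge 30$). So your text can only be assessed as a strategy, and you are right not to claim a theorem. Your reduction to the single inequality $V_{nc}(n) < V_c(n)$ (with attainment of the minima via Kazhdan--Margulis \cite{KM68} and Wang \cite{Wang72}) and your explanation of why $n=3$ is excluded match the paper's framing, which attributes the failure at $n=3$ to Thurston's Dehn surgery. However, your even-dimensional analysis contains a genuine error of case assignment. You derive evenness of $\chi$ for compact manifolds only when $n \equiv 2 \pmod{4}$ (skew-symmetric middle intersection form) and declare $n \equiv 0 \pmod{4}$, already $n=4$, ``delicate,'' requiring Emery's theorem \cite{Eme-Euler2} to exclude compact rational homology spheres. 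In fact the paper cites the \emph{general} fact that a compact hyperbolic manifold has even Euler characteristic in every even dimension (\cite[Theorem~4.4]{Ratcliffe2001899}; e.g.\ in dimension $4$ the signature of a closed hyperbolic manifold vanishes since its Pontryagin forms vanish, and $\chi \equiv \sigma \pmod{2}$). This is precisely how the known cases $n=4$ and $n=6$ are settled: evenness of $\chi$ on the compact side combined with the noncompact $|\chi|=1$ examples of \cite{RatTsch00, ERT10}. Emery's theorem is a further refinement on the compact arithmetic side, not the rescue of the residue class $0 \pmod 4$. Conversely, your $n \equiv 2 \pmod 4$ branch is not ``done'': noncompact hyperbolic manifolds with $|\chi|=1$ are known to exist only for $n=2,4,6$, and their existence in higher even dimensions is open, so your even-dimensional reduction leaves the conjecture open exactly where the paper already says it is open.

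For odd $n$ (and large even $n$), your two-step plan is essentially the architecture of \cite{BelEme13}: upper bounds for $V_{nc}$ via explicit torsion-free finite-index subgroups over the minimal noncompact arithmetic orbifolds, lower bounds for $V_c$ from the orbifold minima, with the super-exponential growth of the ratio $\omega_{c}(n)/\omega_{nc}(n)$ (Corollary~\ref{mbel:sec3:cor}) absorbing the torsion-free index for $n \ge 30$. One ingredient of the actual argument that you do not mention, and which the paper highlights as a key finding, is the lower bound on the degree of \emph{any} smooth cover of an orbifold extracted from the denominator of $\chi(\mathcal{S})$ for even-dimensional totally geodesic suborbifolds $\mathcal{S}$; your cruder bound $V_c \ge \omega_c$ alone may not survive the index inflation in your step (b) in moderate dimensions, which is exactly the difficulty you flag. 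You are also correct that removing arithmeticity on the compact side requires either the MVC of Section~\ref{mbel:sec5} or an independent lower bound, and that the known non-arithmetic bounds decay super-exponentially and are far too weak. In sum: your proposal correctly identifies the overall structure and the genuine obstructions, but it misassigns the difficulty between the two even residue classes and under-specifies the cover-degree control on which the known partial proof actually runs.
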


Dimension $n = 3$ is special because it is the only dimension in which we can perform hyperbolic Dehn fillings on the cusps of a finite volume noncompact manifold $\mathcal{M}_1$ to obtain compact hyperbolic manifolds of smaller volume. This follows from Thurston's Dehn surgery theorem (cf. \cite[Sections~5 and 6]{Thur80}). Our conjecture essentially says that this is the only way to produce very small compact hyperbolic $n$-manifolds.

The conjecture is known to be true in dimensions $n\le 4$ and $6$. More precisely, for these $n$ there exist noncompact hyperbolic $n$-manifolds $\cM$ with $|\chi(\cM)| = 1$ \cite{RatTsch00, ERT10}, whereas it is a general fact that the Euler characteristic of a compact hyperbolic manifold is even (cf. \cite[Theorem~4.4]{Ratcliffe2001899}). The main result of \cite{BelEme13} is a proof of the conjecture for arithmetic hyperbolic $n$-manifolds of dimension $n \ge 30$. In the next section we shall discuss the minimal volume conjecture (MVC), which together with this result would imply Conjecture~\ref{mbel:sec4:conj} for $n\ge 30$, however, the above conjecture is weaker than the MVC and we hope that it might be possible to attack it directly.

The proof of the theorem in \cite{BelEme13} is based on the results about minimal volume arithmetic hyperbolic $n$-orbifolds discussed in the previous section combined with a certain control over their manifold covers. For the latter we use explicit arithmetic constructions providing the upper bounds and the orbifold Euler characteristic for the lower bounds. One of our findings was that even in odd dimensions the Euler characteristic could provide an effective tool for bounding the degree of the smooth covers. Indeed, if an orbifold $\cO$ under consideration has an even dimensional totally geodesic suborbifold $\mathcal{S}$, then the denominator of $\chi(\mathcal{S})$ gives a lower bound for the degrees of the manifold covers of $\cO$. It appears that small volume hyperbolic $n$-orbifolds tend to contain many totally geodesic codimension-one suborbifolds whose Euler characteristics we can use.

The main feature of noncompact finite volume hyperbolic orbifolds is that they have infinite ends that are called \emph{cusps}. Any such cusp is diffeomorphic to $\mathcal{N}\times [0, +\infty)$ for some closed connected flat $(n-1)$-orbifold $\mathcal{N}$. Geometry of the cusps plays a major role in the study of noncompact hyperbolic orbifolds and their volumes. For example, the cusp volume was used by Meyerhoff in his work on the noncompact minimal volume hyperbolic $3$-orbifold \cite{Meyerh85}, and later by Hild in the proof of minimality for hyperbolic $n$-orbifolds in dimensions $n\le 9$ \cite{Hild07}.

Long and Reid showed that any closed flat $(n-1)$-manifold $\mathcal{N}$ is diffeomorphic to a cusp cross-section of a finite volume hyperbolic $n$-orbifold $\cM$ \cite{LongReid02}. It was later proved by McReynolds that the same can be achieved with $\cM$ being a manifold \cite{McReynolds09}. In both constructions the resulting $n$-orbifold or manifold can be chosen to be arithmetic. By Margulis lemma, any finite volume hyperbolic $n$-orbifold has a finite number of cusps (cf. \cite[Proposition~5.11.1]{Thur80}). In the same paper Long and Reid raised a question about existence of $1$-cusped hyperbolic $n$-manifolds for $n \ge 4$. In dimension $4$ this problem was recently solved by Kolpakov and Martelli \cite{KolpMart13}, who constructed infinite families of hyperbolic $4$-manifolds with any given number $k \ge 1$ of cusps. The method of Kolpakov--Martelli is specific for $n = 4$ and not applicable in higher dimensions. In particular, it is not known if there exists a $5$-dimensional $1$-cusped hyperbolic manifold. On the other hand, Stover \cite{Stover13} has shown that in dimensions $n\ge 30$ there are no $1$-cusped arithmetic hyperbolic $n$-orbifolds (or manifolds). Following our yoga, this suggests that there should be no any $1$-cusped hyperbolic $n$-orbifolds in high dimensions. To conclude the discussion, let us mention that (arithmetic) $1$-cusped hyperbolic $n$-orbifolds  in dimensions $n\le 9$ appear, for example, in Hild's paper \cite{Hild07}, and for $n = 10$ and $11$ were constructed by Stover in \cite{Stover13}. The existence of $1$-cusped orbifolds in dimensions $12 \le n \le 29$ is not known and there is not even a conjecture about it.

A careful reader would notice that dimension bound $30$ appeared in two independent results discussed in this section. It is also the dimension bound in the celebrated Vinberg's theorem that says that there no arithmetic hyperbolic reflection groups in dimensions $n \ge 30$ \cite{Vinb81}. There is no reason, however, to expect that any of these bounds is sharp. The coincidence can be explain by the fact that arithmetic methods work very well in higher dimensions and $30$ is about the place where this starts to be noticeable. It might be possible to push down the bounds using the same methods but it would require a considerable effort and obtaining a sharp bound for any of these problems would most likely require some totally new ideas.

\section{Minimal volume without arithmeticity}\label{mbel:sec5}

By the Kazhdan--Margulis theorem \cite{KM68} and the subsequent work of Wang \cite{Wang72}, we know that for $n\ge 4$ there exists a minimal volume hyperbolic $n$-orbifold. The classical results on uniformization of Riemann surfaces and classification of Fuchsian groups imply that the same holds true for $n = 2$, while the work of J{\o}rgensen--Thurston \cite[\S~5.12]{Thur80} implies the same for $n = 3$. These papers also imply that there are smallest representatives in the restricted classes of compact/noncompact orbifolds or manifolds. Thus for each dimension $n$ we have four positive numbers representing the minimal values in the volume spectra.

A \emph{folklore conjecture}, which we call the \emph{MVC}, says that the minimal volume is always attained on arithmetic quotient spaces. This conjecture was known for a long time for $n = 2$. (Note that the smallest volume for
compact or noncompact manifolds in dimension $2$ is attained also on nonarithmetic surfaces, and conjecturally this is the only dimension when it happens.) The MVC has now been completely confirmed for $n = 3$ --- see \cite{Meyerh85, Adams87, GMM10, GehrMart09, MarshMart12} for the results covering each of the four cases. The smallest noncompact hyperbolic $n$-orbifolds for $n \le 9$ were determined by Hild in his thesis \cite{HildPhD} (see also \cite{Hild07}) and they are all arithmetic. For $n = 4$ and $n= 6$ there are examples of noncompact arithmetic hyperbolic $n$-manifolds $\mathcal{M}$ with $|\chi(\mathcal{M})| = 1$ which is the smallest possible \cite{RatTsch00, ERT10}. The smallest known compact orientable hyperbolic $4$-manifolds have $\chi = 16$. They were constructed independently by Conder--Maclachlan \cite{CondMac05} and Long \cite{Long08} and can be described as finite-sheeted covers of the smallest compact arithmetic hyperbolic $4$-orbifold from \cite{Bel04}, but it is not known if there exist any smaller examples. In fact, the problem of finding a compact hyperbolic $4$-manifold of minimal volume was one of the main motivations for \cite{Bel04}. Most of the small dimensional examples discussed here are ultimately related to hyperbolic reflection groups and we refer to the survey paper by Kellerhals for more about this connection \cite{Kel14}. So far, these are the only known results supporting the conjecture.

In this section we are going to discuss known general lower bounds for the volume that do not require arithmeticity. These bounds come either from a quantitative analysis of the proof of the Kazhdan--Margulis theorem or from \emph{the Margulis lemma} and related estimates.

\medskip

Let us recall the Margulis lemma for the case of hyperbolic spaces (cf. \cite[Lemma 5.10.1]{Thur80}):
\begin{lemma}
 For every dimension $n$ there is a constant $\mu = \mu_n > 0$ such that for every discrete group $\Gamma < \mathrm{Isom}(\Hy)$ and every $x \in \Hy$, the group $\Gamma_\mu(x) = \langle \gamma\in\Gamma\;\mid\; \mathrm{dist}(x,\gamma(x)) \le \mu\rangle$ has an abelian subgroup of finite index.
\end{lemma}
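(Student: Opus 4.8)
The plan is to realise $\mathrm{Isom}(\Hy)$ as a Lie group $G$ and run the standard Zassenhaus-neighbourhood argument, the only genuine subtlety being the compactness of the point stabiliser. First I would fix the maximal compact subgroup $K = G_x = \{g \in G : g(x) = x\}$ and observe that the set of short elements $B_\mu = \{g \in G : \dist(x, g(x)) \le \mu\}$ is a compact neighbourhood of $K$ which contracts onto $K$ as $\mu \to 0$. The crucial point is that these short elements cluster near $K$, \emph{not} near the identity $e$; this is precisely what forces the conclusion to be "virtually abelian" rather than "nilpotent," and it dictates the whole structure of the argument.

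The core input is the Zassenhaus lemma: there is a neighbourhood $\Omega$ of $e$ in $G$ such that for every discrete subgroup $\Gamma < G$ the group $\langle \Gamma \cap \Omega\rangle$ is nilpotent. The mechanism is commutator contraction — on a small enough neighbourhood of $e$ the iterated commutators of a finite set of elements converge to $e$ in any fixed word pattern — so by discreteness of $\Gamma$ these iterated commutators must eventually be trivial, which both forces nilpotency and bounds the nilpotency class in terms of $n$ alone.

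To bridge the gap between "close to $K$" and "close to $e$" I would exploit the compactness of $K$: choose a symmetric neighbourhood $\Omega_0$ of $e$ with $\Omega_0\Omega_0^{-1}\subset \Omega$, cover $K$ by finitely many left translates $k_1\Omega_0, \dots, k_N\Omega_0$, and then pick $\mu = \mu_n$ small enough that $B_\mu \subset \bigcup_{i=1}^{N} k_i \Omega_0$. Any two short generators lying in the same cell $k_i\Omega_0$ have ratio in $\Omega$, so a pigeonhole argument over the $N$ cells produces a subgroup of $\Gamma_\mu(x)$ of index bounded by a function of $N$ (hence of $n$) that is generated by elements of $\Gamma \cap \Omega$, and is therefore nilpotent by Zassenhaus. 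Thus $\Gamma_\mu(x)$ is virtually nilpotent with index controlled uniformly in $n$. Finally I would upgrade "virtually nilpotent" to "virtually abelian" using the geometry of $\Hy$ specifically: a discrete elementary subgroup of $\mathrm{Isom}(\Hy)$ is virtually abelian, since it either fixes a point of $\Hy$ and so lies in a conjugate of the compact $K$ (where discrete subgroups are finite), fixes a point of $\partial\overline{\Hy}$ and so lies in the stabiliser $\mathrm{Sim}(\RR^{n-1})$ (where discrete subgroups are virtually abelian by Bieberbach's theorem), or preserves a geodesic (and is then virtually cyclic).

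The main obstacle is the covering-and-pigeonhole passage from the displacement condition to the Zassenhaus condition: because the short elements concentrate near the compact group $K$ rather than near $e$, one cannot apply Zassenhaus directly, and this is exactly where the unavoidable finite-index loss enters and where the constant $\mu_n$ must be fixed only \emph{after} a covering of $K$ is chosen. By contrast, the Zassenhaus contraction estimate and the classification of elementary discrete subgroups are standard once this reduction is in place.
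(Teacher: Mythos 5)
First, a point of comparison: the paper does not actually prove this lemma --- it recalls it from Thurston \cite[Lemma 5.10.1]{Thur80} --- so your proposal must be measured against the classical proof in the cited literature. Your architecture matches that proof exactly: a Zassenhaus neighbourhood $\Omega$ of $e$, a finite covering of the maximal compact $K = G_x$ to bridge ``close to $K$'' and ``close to $e$,'' and the classification of elementary discrete subgroups of $\mathrm{Isom}(\Hy)$ (elliptic/parabolic/loxodromic) to upgrade nilpotent to abelian. Steps one and three are essentially correct as sketched (with the small caveats that you need the standard fact that a discrete virtually nilpotent subgroup of $\mathrm{Isom}(\Hy)$ is elementary before the trichotomy applies, and that in the parabolic case Bieberbach alone does not cover subgroups of $\mathrm{Sim}(\RR^{n-1})$ containing a similarity of ratio $\neq 1$; those need the separate, easy observation that conjugating short translations by a dilation violates discreteness, forcing virtual cyclicity).

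The genuine gap is in the pigeonhole step, which is the technical heart of the whole lemma. What your covering argument gives is that the generating set $S = \Gamma \cap B_\mu$ is contained in at most $N$ left cosets of $H = \langle \Gamma \cap \Omega \rangle$ (since $s,t \in k_i\Omega_0$ forces $s^{-1}t \in \Omega$, i.e. $sH = tH$). But this does not bound --- or even establish the finiteness of --- the index of $H \cap \Gamma_\mu(x)$ in $\Gamma_\mu(x)$: cosets of a non-normal subgroup are not closed under multiplication, and words in $S$ can a priori visit infinitely many cosets. The abstract inference fails outright: in the free group $\langle a,b \rangle$ both generators lie in the single coset $a\langle a^{-1}b\rangle$ of the subgroup $\langle a^{-1}b\rangle$, which has infinite index. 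The standard repair is a nontrivial induction on word length with a shortening step: one covers a larger compact set containing products of short elements (still close to $K$, because $K$ is a subgroup) and shows that every coset of $H$ met by $\Gamma_\mu(x)$ admits a representative given by a word all of whose partial products stay in the covered region --- if a minimal-length representative escapes, two of its partial products share a cell and the word can be shortened modulo $H$. Equivalently, many treatments (including Thurston's, and the explicit-constant versions in the Buser--Karcher style) strengthen ``short'' to mean small displacement \emph{and} derivative at $x$ close to the identity, so that genuine commutator contraction applies to the subgroup so defined, and the finite covering of $\Ort(n)$ by small cells then bounds the index. You correctly identified this passage as the main obstacle, but the one-line pigeonhole you offer in its place does not yet close it.
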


For a given discrete group $\Gamma$, the maximal value of $\mu$ such that $\Gamma_\mu(x)$ is virtually abelian is called the \emph{Margulis number} of $\Hy/\Gamma$, and the constant $\mu_n$ from the lemma is called the \emph{Margulis constant} of $G = \mathrm{Isom}(\Hy)$.

If $\cM = \Hy/\Gamma$ is a manifold, this result allows us to define its decomposition $\cM = \cM_{(0,\mu]} \cup \cM_{[\mu,\infty)}$ into a thin and thick parts, and then to estimate from below the volume of $\cM$ by the volume $v(\epsilon)$ of a hyperbolic ball of radius $\epsilon = \mu/2$ which embeds into the thick part $\cM_{[\mu,\infty)}$. The case of orbifolds is much more delicate but still it is possible to use the Margulis lemma to give a lower bound for the volume. This was shown by Gelander in \cite{BGLS10}. The resulting bound for the volume of $\cO^n = \Hy/\Gamma$ is
\begin{equation}
 \Vol(\cO^n) \ge \frac{2v(0.25\epsilon)^2}{v(1.25\epsilon)}, \quad \epsilon = \mathrm{min}\{\frac{\mu_n}{10},1\}.
\end{equation}
The problem with this bound is that in higher dimensions we do not have a good estimate for the value of $\mu_n$. To my best knowledge the only appropriate general estimate can be found in \cite[p.~107]{BGS85}. It gives
\begin{equation}
 \mu_n \ge \frac{0.49}{16\left(1 + 2\left(\frac{4\pi}{0.49}\right)^{n(n-1)/2}\right)}.
\end{equation}
In \cite{Kel04}, Kellerhals gave a much better bound for the Margulis constant of hyperbolic $n$-manifolds but it is not clear if her result should extend to orbifolds.

\medskip

In connection with these results it would be interesting to understand how the Margulis constant depends on the dimension of the hyperbolic space. All known bounds for $\mu_n$ decrease to zero exponentially fast when $n$ goes to infinity, but does $\mu_n$ actually tend to zero? Note that if we define the arithmetic Margulis constant $\mu_n^a$ as the minimal value of the Margulis numbers of arithmetic hyperbolic $n$-orbifolds, then a positive solution to Lehmer's problem about the Mahler measure of algebraic numbers (cf. \cite{Smyth08}) would imply that there is a uniform lower bound for $\mu_n^a$. So conjecturally $\mu_n^a$ is bounded. The situation with $\mu_n$ is different as it is shown by the following result, which I learnt from M.~Kapovich:

\begin{proposition}\label{prop_mu_n}
 There exists a constant $C > 0$ such that $\mu_n \le \frac{C}{\sqrt{n}}$.
\end{proposition}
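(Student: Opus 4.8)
The plan is to bound $\mu_n$ from above by producing, in each dimension, a \emph{single} discrete non--virtually abelian subgroup of $\mathrm{Isom}(\Hy)$ with small Margulis number. It is enough to find a discrete group $\Gamma$ and a point $x\in\Hy$ such that the generators of some non--virtually abelian subgroup of $\Gamma$ all move $x$ a distance at most $C/\sqrt n$: those generators then lie in $\Gamma_{C/\sqrt n}(x)$, whence that subgroup, and a fortiori $\Gamma_{C/\sqrt n}(x)$ itself, fails to be virtually abelian (a group containing a non--virtually abelian subgroup is non--virtually abelian). I would take $\Gamma$ to be generated by reflections. In the hyperboloid model $\Hy\subset\RR^{n,1}$ the reflection $r_v$ in a wall $H=v^\perp$, with $v$ spacelike and $\langle v,v\rangle=1$, satisfies $\cosh\dist(x,r_vx)=1+2\langle x,v\rangle^2=\cosh(2\dist(x,H))$, so $r_v$ moves $x$ by \emph{exactly} $2\dist(x,H)$. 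The problem thus reduces to exhibiting a discrete, non--virtually abelian reflection group together with a point within distance $C/(2\sqrt n)$ of all of its walls --- that is, a reflective polytope of inradius $O(1/\sqrt n)$.

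To see where the exponent $\frac12$ should come from, I would first isolate the following linear-algebra mechanism. Let $v_1,\dots,v_N$ be the unit spacelike normals, with Coxeter Gram matrix $G=(\langle v_i,v_j\rangle)$, and let $x\in\Hy$ be unit timelike, so that $\sinh\dist(x,H_i)=|\langle x,v_i\rangle|=:|a_i|$. Decomposing $v_i=-a_ix+u_i$ with $u_i$ in the Euclidean space $x^\perp\cong\RR^n$ yields $\langle u_i,u_j\rangle=\langle v_i,v_j\rangle+a_ia_j$, so the Gram matrix $G+aa^{\mathrm T}$ of the $u_i$ is positive semidefinite. If the Coxeter diagram is of \emph{indefinite} type (neither finite nor affine), then $G$ has a negative eigenvalue $-\lambda$ with unit eigenvector $\psi$, and positive semidefiniteness forces $0\le\psi^{\mathrm T}(G+aa^{\mathrm T})\psi=-\lambda+(a\cdot\psi)^2$, hence $\|a\|^2\ge\lambda$ and $\max_i\dist(x,H_i)\ge c\sqrt{\lambda/N}$ for an absolute $c>0$. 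Conversely, if $\psi$ is ``flat'' (nearly proportional to $(1,\dots,1)$), this bound is essentially attained by a symmetric point $x$. The scale $\sqrt{\lambda/N}$ is therefore the target: choosing a diagram on $N=\Theta(n)$ walls of indefinite type with a spread-out negative eigenvector and $\lambda=\Theta(1)$ makes $\sqrt{\lambda/N}=\Theta(1/\sqrt n)$, as required. Diagrams obtained by perturbing the affine diagram $\widetilde{\An}_n$ are the natural candidates, the radical of the affine Gram matrix supplying the flat eigenvector.

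Granting such a diagram, discreteness would follow from \emph{Vinberg's criterion}: reflections in the facets of an acute-angled polytope all of whose dihedral angles are integer submultiples of $\pi$ generate a discrete group, with no hypothesis on covolume. This freedom is indispensable, since finite-volume hyperbolic Coxeter polytopes do not exist in high dimensions, so the polytope used here is necessarily of infinite volume. Non--virtual abelianness is then automatic: an irreducible Coxeter group that is neither finite nor affine is non-amenable, hence not virtually abelian.

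The main obstacle is the explicit realization, valid \emph{simultaneously in every dimension}. There is a real tension: driving $\sqrt{\lambda/N}$ down forces the diagram towards the degenerate (affine) boundary, but then the $\Theta(n)$ defining hyperplanes tend to meet only in hyperideal vertices, so they bound no genuine polytope of $\Hy$ and Vinberg's theory demands truncation by the polar hyperplanes of those vertices, under extra orthogonality constraints that sharply restrict the admissible Coxeter data. The most concrete candidate is to realize the affine Weyl group of $\widetilde{\An}_n$ by vertical hyperplanes in the upper half-space model --- a virtually abelian group fixing the boundary point at infinity --- and to adjoin one reflection in a hemispherical wall meeting them at angles $\pi/m$, which ceases to fix that point and should render the group non-elementary. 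The delicate feature here, and plausibly the true origin of the $\sqrt n$, is that the incenter of the $\widetilde{\An}_n$ alcove lies at Euclidean distance of order $\sqrt n$ from the common centre of the vertical walls, so that the radius of an admissible hemisphere and the height at which a common near-point $x$ can sit are in competition; carrying out this balance, and verifying discreteness and non--virtual abelianness of the enlarged group uniformly in $n$, is the crux of the argument.
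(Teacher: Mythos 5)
Your proposal is a program rather than a completed proof, and the decisive step is missing --- as you concede yourself when you write that ``carrying out this balance \dots is the crux of the argument.'' What you actually establish is (a) the correct reduction: to bound $\mu_n$ from above it suffices to exhibit one discrete group and one point $x$ such that the generators of some non--virtually abelian subgroup all move $x$ by at most $C/\sqrt n$; and (b) a Gram-matrix \emph{obstruction}: if the unit normals have indefinite Coxeter matrix $G$ with negative eigenvalue $-\lambda$, then every $x$ satisfies $\max_i\dist(x,H_i)\gtrsim\sqrt{\lambda/N}$. But that inequality points the wrong way --- it bounds from below how close $x$ can be to all walls, and your assertion that it is ``essentially attained'' when the negative eigenvector is flat is precisely what would have to be proven. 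To conclude $\mu_n\le C/\sqrt n$ you must, for \emph{every} $n$, exhibit concrete Coxeter data with $N=\Theta(n)$ and $\lambda=\Theta(1)$, realize them by hyperplanes that genuinely bound an acute-angled polyhedron in $\Hy$ with all dihedral angles of the form $\pi/m$ (so that Vinberg's criterion yields discreteness), verify non--virtual abelianness of the resulting group, and produce a point within $O(1/\sqrt n)$ of all the walls simultaneously. None of this is carried out: you note yourself that near the affine degeneration the $\Theta(n)$ hyperplanes tend not to bound a polytope at all, and for the $\widetilde{\An}_n$-plus-hemisphere candidate both discreteness and non-elementarity are left unverified. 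The integrality of the angles is a real rigidity here --- one cannot freely ``perturb'' the affine diagram --- so even the existence of admissible data uniformly in $n$ is unclear.

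For comparison, the paper's proof (due to Kapovich) sidesteps polytopes entirely, and the difficulty you face evaporates. It takes $\Gamma=F_2=\langle f,g\rangle$ and constructs a $\Gamma$-invariant quasi-isometric embedding of its Cayley tree in which consecutive $\epsilon$-edges are mutually orthogonal, spending one orthogonal direction per step; the hyperbolic Pythagorean theorem then gives $\cosh(b_n)=\cosh(\epsilon)^n$ after $n$ steps, and once $b_n\ge 2.303$ the disjoint-bisectors test and the discreteness lemma of \cite{Kapovich05} apply, so $\mu_n\le\epsilon$ subject only to $\cosh(\epsilon)^n\ge\cosh(2.303)$, i.e.\ $\epsilon\approx C/\sqrt n$. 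Note that the quantitative mechanism matches your heuristic --- $n$ mutually orthogonal small displacements accumulating like $n\epsilon^2/2$, the analogue of your $\sqrt{\lambda/N}$ with $N=\Theta(n)$ --- but it is packaged so that discreteness is a cited lemma about free groups rather than an unsolved realization problem for Coxeter polytopes; and, like your intended polytope, the resulting group has infinite covolume, which is exactly the freedom being exploited. Your reflection-group route, if completed, would be a genuinely different construction, but as it stands nothing in the outline guarantees the required polytopes exist, so the bound is not established.
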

\begin{proof}
The argument is based on ideas from \cite{Kapovich05}.

Let us fix $\epsilon > 0$. We want to construct a discrete group of isometries $\Gamma < \mathrm{Isom}(\Hy)$ for which the Margulis number of $\Hy/\Gamma$ is less than $\epsilon$. Let $\Gamma = F_2 = \langle f,g \rangle$, a two-generator free group. We would like to define a $\Gamma$-invariant quasi-isometric embedding of the Cayley graph $T$ of $\Gamma$ into $\Hy$ such that the generators of $\Gamma$ act by isometries with small displacement. The graph $T$ is a regular tree of degree four whose edges can be labeled by the generators $f$, $g$ and their inverses (starting from the root of $T$). Let us map the root to $p_0 \in \Hy$ and embed the edge corresponding to $f$ as a geodesic segment $[p_0, p_1]$ of length $\epsilon$. Now choose a geodesic through $p_1$ orthogonal to $[p_0, p_1]$ and map the $g$-edge adjacent to $p_1$ as an $\epsilon$-segment $[p_1, p_2]$ of this geodesic. We can continue this process inductively each time choosing a geodesic which is orthogonal to the subspace containing the previously embedded edges. The process terminates when we get to $p_n$ as $n$ is the dimension of the space. Along the way we have defined the action of the generators $f$, $g$ on the points $p_0$, $p_1,\ldots, p_n$ which can be extended to isometry of $\Hy$. Now use these isometries to embed the rest of the tree. The construction gives an embedding $\rho: T\to \Hy$ and an isometric action of $\Gamma$ on $\Hy$ leaving invariant the image $\rho(T)$. It remains to check that $\rho$ is a quasi-isometric embedding.

We need to estimate the distance in $\Hy$ between the images of different vertices $x,y\in T$ and check that it satisfies the quasi-isometric property with respect to $\dist_T(x,y)$. Let $x = p_0$ and $y = p_m$ for some $m > n$ --- this is a typical case and all the other easily reduce to it. Let $b_i = \dist_{\Hy}(p_0, p_i)$, so $b_0 = \epsilon$ and $b_n = \dist_{\Hy}(p_0, p_n)$. By the hyperbolic Pythagorean theorem we have
$$\cosh(b_{i+1}) = \cosh(b_i)\cosh(\epsilon),$$
therefore, $\cosh(b_n) = (\cosh(\epsilon))^n$. We now can use the disjoint bisectors test (cf. \cite[Section~3]{Kapovich05}). If the length $b_n$ is bigger than a certain constant (which can be taken $=2.303$ as in the proof of Lemma~3.2 [loc. cit.]), then the bisectors of $[p_0,p_n]$ and $[p_n,p_{2n}]$ do not intersect and hence are separated by the distance $\delta = \delta(\rho) > 0$. Hence we have $\dist_{\Hy}(p_0, p_m) \ge \delta [m/n]$, as the geodesic joining $p_0$ and $p_m$ will have to intersect all the intermediate bisectors. It follows that $\rho$ is a quasi-isometry, provided
\begin{equation*}%\label{eq_mar}
b_n \ge 2.303.
\end{equation*}

It remains to apply \cite[Lemma 2.2]{Kapovich05}, which shows that the isometric action of $\Gamma$ on $\Hy$ is discrete and hence
$$\mu_n \le \mu(\Hy/\Gamma) = \epsilon.$$

We conclude with an estimate for the constants:
$$\cosh(b_n) = (\cosh(\epsilon))^n \ge \cosh(2.303).$$
When $\epsilon\to 0$, we have $\cosh(\epsilon)  \approx 1 + \frac{\epsilon^2}2$, so there exists $C>0$ (can take 
$$C = \sqrt{2\log(\cosh(2.303))} = 1.799\ldots$$ 
if $n$ is sufficiently large) such that if $\epsilon \ge \frac{C}{\sqrt{n}}$ then $\rho$ is a quasi-isometric embedding.
\end{proof}

The groups in Proposition~\ref{prop_mu_n} have infinite covolume. It is tempting to try a similar argument on non-arithmetic lattices with small systole. Such lattices can be obtained by the inbreeding construction found by Agol for $n=4$ \cite{Agol06} and generalized to higher dimensions in \cite{BelT11} and \cite{BHW11}. However, as it stands for now, it is not clear how to make this work and the problem remains open. The other important open problem is to find a better lower bound for $\mu_n$. We can speculate that some kind of polynomially decreasing lower bound should exist.

\medskip

The other approach is to bound the volume via quantitative version of the Kazhdan--Margulis theorem. It goes back to the paper \cite{Wang69} by Wang, who found an explicit lower bound for the radius of a ball embedded into a Zassenhaus neighborhood of a Lie group $G$ (with $G \cong \PO(n,1)^\circ$ in our case). Adeboye and Wei combined this bound with a bound for the sectional curvature of $G$ to obtain an explicit lower bound for the volume \cite{AW12}. Their main result is
\begin{equation}
 \Vol(\cO^n) \ge \frac{2^{[\frac{6-n}{4}]}   \pi^{[\frac{n}{4}]}(n-2)!(n-4)!\cdots 1}{(2+9n)^{[\frac{n^2+n}{4}]}\Gamma(\frac{n^2+n}{4})}\int_{0}^{\min[0.08\sqrt{2+9n},\pi]}\sin^{\frac{n^2+n-2}{2}}\rho\,\,\,d\rho.
\end{equation}
This lower bound decreases super-exponentially when $n$ goes to infinity and it is currently the best known general lower bound for the volume.

For the noncompact hyperbolic orbifolds and manifolds there is also another approach to bounding the volume. It is based on estimating the density of Euclidean sphere packings associated to cusps. It was used in the papers of Meyerhoff, Adams, and Hild that were mentioned above. For the case of arbitrarily large dimension $n$, Kellerhals applied this method to obtain the best available lower bound for the volume of non-compact hyperbolic $n$-manifolds \cite{Kel98}:
\begin{equation}
\Vol(\cM^n_1) \ge m \frac{2n}{n(n+1)} \nu_n,
\end{equation}
where $m$ is the number of cusps of the manifold $\cM^n_1$ and $\nu_n$ denotes the volume of the ideal regular simplex in $\Hy$. Note that by Milnor's formula for large $n$ we have $\nu_n \approx \frac{e\sqrt{n}}{n!}$ and hence again we have a lower bound that decreases super-exponentially with $n$.

\medskip

In conclusion, we see that for large dimensions there is a very large gap between the known (\emph{super-exponentially decreasing}) bound and the conjectural (\emph{super-exponentially increasing}) values of the minimal volume. This gap highlights our limited understanding of the complexity and structure of high-dimensional hyperbolic manifolds and orbifolds, especially the non-arithmetic ones. I hope that the future research will shed more light onto this problem.

\bibliographystyle{amsplain}
\bibliography{mbel}

\end{document}